\documentclass[12pt,a4paper]{amsart}

\setlength{\textwidth}{160mm}
\setlength{\textheight}{237mm}
\setlength{\abovedisplayskip}{14pt}
\setlength{\belowdisplayskip}{14pt}
\setlength{\abovedisplayshortskip}{14pt}
\setlength{\belowdisplayshortskip}{14pt}
\setlength{\oddsidemargin}{0in}
\setlength{\evensidemargin}{0in}
\topmargin=-.1cm

\usepackage{amsmath}
\usepackage{amssymb}

\theoremstyle{plain}   
\begingroup 

\newtheorem{theorem}{Theorem}[section]   
\newtheorem{lemma}[theorem]{Lemma}         
\endgroup

\theoremstyle{definition}

\theoremstyle{remark}

\newtheorem{notation}[theorem]{Notation}


\newcommand{\vf}{\varphi}
\newcommand{\R}{{\mathbb R}}
\newcommand{\N}{{\mathbb N}}
\newcommand{\Z}{\mathbb Z}

\newcommand{\dist}{\operatorname{dist}}

\usepackage[dvips]{color}

\begin{document}

\title{On the set of points at which an increasing continuous singular function has a nonzero finite derivative   }

\thanks{}

\author{Marta Kossaczk{\'a}}
\author{Lud\v ek Zaj\'\i\v{c}ek}

\subjclass[2010]{26A30  }

\keywords{increasing singular function, nonzero finite derivative, $F_{\sigma}$  null set  }

\email{kossaczka@karlin.mff.cuni.cz}
\email{zajicek@karlin.mff.cuni.cz}

\address{Charles University,
Faculty of Mathematics and Physics,
Sokolovsk\'a 83,
186 75 Praha 8-Karl\'\i n,
Czech Republic}


\begin{abstract} 
S{\'a}nchez, Viader, Parad{\'\i}s and Carrillo (2016) proved that there exists
 an increasing continuous singular function $f$ on $[0,1]$    such that the set $A_f$ of points where $f$ has a nonzero finite derivative has Hausdorff dimension 1 in each subinterval of $[0,1]$. We prove a stronger
 (and optimal) result showing that a set $A_f$ as above can contain any prescribed $F_{\sigma}$ null subset of
 $[0,1]$.
\end{abstract}

\maketitle

\markboth{M. Kossaczk{\'a}, L.~Zaj\'{\i}\v{c}ek}{A note on the set of points at which an increasing singular function has a nonzero finite derivative   }

\section{Introduction}

By a singular function we mean (following \cite{sanchez2012singular}) a continuous nonconstant function with zero derivative almost everywhere.  Motivated by the observation that well-known singular functions have at no point a nonzero finite derivative, the authors 
of \cite{sanchez2012singular} constructed
 an  increasing singular function $f$ on $[0,1]$ such that the set   $A_f$ of points where $f$ has a nonzero finite derivative  is uncountable. Later it was shown
in \cite{sanchez2014singular} that such $A_f$ can be dense in $[0,1]$ and in  
\cite{sanchez2016singular} that $A_f$ can have Hausdorff dimension 1 in each subinterval of $[0,1]$. 

We prove in the present note a stronger
 result showing that a set $A_f$ as above can contain any prescribed $F_{\sigma}$ null subset of
 $[0,1]$,  and that this result on the size of sets $A_f$ is optimal. 
More presisely, we prove the following result.

\begin{theorem}\label{th}
Let $A \subset [0,1]$. Then the following conditions are equivalent.
\begin{enumerate}
\item [(i)] There exits a strictly increasing continuous singular function $f$ on $[0,1]$
  such that  $f$ has a nonzero finite derivative at each point of $A$.
\item [(ii)] $A$ is a subset of an $F_{\sigma}$ Lebesgue null set. 
\end{enumerate}
\end{theorem}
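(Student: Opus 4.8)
The plan is to prove the two implications separately; $(1)\Rightarrow(2)$ is soft, and $(2)\Rightarrow(1)$ rests on an explicit construction which is the heart of the matter. Throughout, for $x\in[0,1]$ and $K\in\N$ let $J_K(x)$ be the dyadic subinterval of $[0,1]$ of length $2^{-K}$ containing $x$.

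\smallskip
$(1)\Rightarrow(2)$. Since $f$ is increasing we have $f'>0$ on $A$, so $A=\bigcup_n A_n$ with
$A_n=\{x\in A:\ \tfrac1n\le\frac{f(y)-f(x)}{y-x}\le n\ \text{whenever }0<|y-x|<\tfrac1n\}$;
indeed if $f'(x)=L\in(0,\infty)$ then $x\in A_n$ for $n\ge\max(2L,2/L,1/\delta)$ once $\tfrac L2\le\frac{f(y)-f(x)}{y-x}\le2L$ for $0<|y-x|<\delta$. The same inequalities, with $\tfrac1n$ weakened to $\tfrac1{2n}$, hold for every $x\in\overline{A_n}$: approximate $x$ by points of $A_n$ and pass to the limit in the difference quotient using continuity of $f$. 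Hence $f$ is bi-Lipschitz (constants $n^{\pm1}$) on $\overline{A_n}\cap I$ for each interval $I$ of length $<\tfrac1{2n}$. Now $f$, being strictly increasing and continuous, is a homeomorphism of $[0,1]$ onto $[f(0),f(1)]$, so $B\mapsto\lambda(f(B))$ is a Borel measure coinciding with the Lebesgue--Stieltjes measure $\mu_f$; singularity of $f$ gives a Borel set $N$ with $\lambda(N)=0$ and $\lambda(f([0,1]\setminus N))=\mu_f([0,1]\setminus N)=0$. Writing $\overline{A_n}\cap I=(\overline{A_n}\cap I\cap N)\cup(\overline{A_n}\cap I\setminus N)$, the first piece is $\lambda$-null and the second has measure $\le n\,\lambda(f([0,1]\setminus N))=0$ (as $f^{-1}$ is $n$-Lipschitz on $f(\overline{A_n}\cap I)$), so $\lambda(\overline{A_n})=0$. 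Thus $A\subseteq\bigcup_n\overline{A_n}$ is an $F_\sigma$ null set.

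\smallskip
$(2)\Rightarrow(1)$. Enlarging $A$ if necessary, assume $A=\bigcup_n F_n$ with $F_n$ compact, $F_n\subseteq F_{n+1}$ and $\lambda(A)=0$. Fix open sets $G_1\supseteq G_2\supseteq\cdots$ with $F_m\subseteq G_m$ and $\lambda(G_m)\to0$; then $A\subseteq\bigcap_m G_m$, a $\lambda$-null set whose complement is therefore dense, and $\rho(m_0,m):=\dist(F_{m_0},[0,1]\setminus G_m)>0$ for $m\ge m_0$, with $\rho(m_0,m)\downarrow0$ as $m\to\infty$ (density of the complement of $\bigcap_m G_m$). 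I would take $f$ to be the distribution function of a Borel probability measure $\mu$ on $[0,1]$ described on the dyadic grid: every level-$K$ dyadic interval $J$ is split into its two halves, which receive $\tfrac12\mu(J)$ and $\tfrac12\mu(J)$ if $J\subseteq G_{\ell(K)}$ (a \emph{balanced} split), and $\tfrac34\mu(J)$ (left) and $\tfrac14\mu(J)$ (right) otherwise (a \emph{distorted} split); here $\ell:\N\to\N$ is nondecreasing with $\ell(K)\to\infty$, chosen by a diagonal argument over $m_0$ so slowly that $\rho(m_0,\ell(K))\big/2^{-K}\to\infty$ for every fixed $m_0$. Then $\mu$ has no atoms (every allotted fraction is $\le\tfrac34$) and full support (every fraction is positive), so $f$ is continuous and strictly increasing. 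For $\lambda$-a.e.\ $z$ one has $z\notin\bigcap_m G_m$, hence $J_K(z)\not\subseteq G_{\ell(K)}$ for all large $K$, so the splits along $z$ are eventually all distorted; since the sequence "left/right along $z$" is, for $\lambda$-a.e.\ $z$, fair and i.i.d., the strong law gives $\log\big(\mu(J_K(z))/2^{-K}\big)\to-\infty$, i.e.\ $\mu(J_K(z))/\lambda(J_K(z))\to0$ $\lambda$-a.e. Comparing this with the dyadic Lebesgue differentiation theorem (for the absolutely continuous part of $\mu$) and with the fact that the singular part $\mu_s$ satisfies $\lim_{r\to0}\mu_s(B(x,r))/2r=0$ for $\lambda$-a.e.\ $x$ forces $\mu\perp\lambda$, i.e.\ $f$ is singular.

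\smallskip
It remains to check that $f'(x)$ exists and is finite and nonzero for each $x\in A$, which is the crux. Fix $x\in F_{m_0}$. Since $J_K(x)\subseteq B(x,2^{-K})$ and $2^{-K}<\rho(m_0,\ell(K))$ for large $K$, the level-$K$ split along $x$ is eventually balanced; as a balanced split preserves the ratio $\mu/\lambda$ exactly, the number $c_x:=\lim_K\mu(J_K(x))/2^{-K}$ exists and lies in $(0,\infty)$. Moreover, with $q(K):=\lceil\log_2(1/\rho(m_0,\ell(K)))\rceil+C$ (a fixed $C$ to spare room) one gets $q(K)\ll K$ and $q(K)\to\infty$, and the dyadic interval $J_{q(K)}(x)$ — together with enough of its dyadic vicinity — lies in $G_{\ell(K)}\subseteq G_{\ell(j)}$ for all $j\le K$, so \emph{all} its descendants down to level $K$ are split balancedly; hence $\mu$ is proportional to $\lambda$, with constant $\to c_x$, throughout this macroscopic neighbourhood of $x$ at all scales between $2^{-q(K)}$ and $2^{-K}$. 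For $y$ with $|y-x|$ small, choosing $K$ with $2^{-K}\asymp|y-x|^2$ (so $q(K)\ll\log_2(1/|y-x|)$, keeping $y$ inside that neighbourhood) and summing over the level-$K$ intervals in $[x\wedge y,x\vee y]$ — the only error being $O(c_x\,2^{-K})$ from the $O(1)$ boundary intervals — gives $\big|\tfrac{f(y)-f(x)}{y-x}-c_x\big|=O\!\big(2^{-K}/|y-x|\big)+o(1)\to0$. Thus $f'(x)=c_x$, finite and nonzero. The main obstacle is precisely this balancing act: singularity demands that $\mu/\lambda$ be wild on a set of full measure, while a genuine two-sided derivative at \emph{every} point of $A$ demands that $\mu$ be uniformly comparable to $\lambda$ throughout a whole neighbourhood of each such point at every fine scale; reconciling the two is what dictates confining the distortion to the shrinking sets $[0,1]\setminus G_{\ell(K)}$ and tuning the growth rate of $\ell$ as above (and the "dyadic boundary" nuisances in the last estimate are handled routinely, e.g.\ by the usual shifted-dyadic trick). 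This is also the only place the hypothesis "$A$ lies in an $F_\sigma$ null set" (rather than merely a null set) is used: the positive distances $\rho(m_0,m)$ require the \emph{closed} pieces $F_m$, and indeed a dense $G_\delta$ null set, being nonmeager, admits no $f$ as in (1) at all by the first part.
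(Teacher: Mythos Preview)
Your argument for $(1)\Rightarrow(2)$ is correct and pleasantly direct: the bi-Lipschitz estimate on each $\overline{A_n}\cap I$ together with the singularity of $\mu_f$ does force $\lambda(\overline{A_n})=0$. The paper proceeds differently, observing that $f'$ is Baire-one on the set where it is finite, so $\{f'>0\}$ is relatively $F_\sigma$ there; both routes are short.

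Your construction for $(2)\Rightarrow(1)$ is a genuinely different idea from the paper's, and the singularity part (distorted splits eventually dominate along $\lambda$-a.e.\ orbit, SLLN, hence $d\mu_{ac}/d\lambda=0$) is fine. The gap is in the derivative step, precisely at the ``dyadic boundary nuisance'' you wave away. What your argument actually proves is that \emph{inside $J_{q(K)}(x)$} every level-$K$ subinterval has ratio $\mu/\lambda$ equal to $R_{q(K)}(x)=c_x$. But for $y$ in the adjacent level-$q(K)$ interval $J'$, the corresponding ratio is $R':=\mu(J')/\lambda(J')$, and nothing you have written forces $R'$ to equal (or even approach) $c_x$: the path from the root to $J'$ separates from the path to $J_{q(K)}(x)$ at the last common ancestor, whose level is $q(K)$ minus the length of the run of identical binary digits of $x$ ending at position $q(K)$. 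For $x\in A$ whose binary expansion has very long runs, this common-ancestor level can be far below $q(K)$, and the intervening splits along the $J'$-branch need not be balanced under your stated hypothesis $\rho(m_0,\ell(K))/2^{-K}\to\infty$ alone. In that situation $(f(y)-f(x))/(y-x)$ picks up a contribution proportional to $R'-c_x$, and your estimate $O(2^{-K}/|y-x|)+o(1)$ is not justified. The ``usual shifted-dyadic trick'' does not rescue this: $\mu$ is built from the \emph{standard} grid, so passing to a shifted grid gives no control on $\mu/\lambda$ there. (A related slip: with only $\rho\cdot 2^{K}\to\infty$ you get $q(K)\ll K$, not $q(K)\ll K/2$, so even the containment $|y-x|\ll 2^{-q(K)}$ needs a stronger choice of $\ell$.) The construction may well be salvageable by a finer analysis --- tracking, for each dyadic interval near $x$, the largest level at which its \emph{Euclidean} distance to $x$ still forces a balanced split --- but that is substantially more than you have written, and it is not a routine trick.

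For comparison, the paper sidesteps all dyadic geometry: it writes $f=\sum_n 2^{-n}g_n$ where each $g_n$ is a nondecreasing singular function with $g_n\equiv\mathrm{id}$ on $F_n$, $g_n'=0$ on $M\setminus F_n$, and the quantitative bound $|g_n(t)-t|\le\operatorname{dist}(t,F_n)^2$. For $a\in F_n\setminus F_{n-1}$ the first $n-1$ summands have derivative $0$ at $a$, while the tail satisfies $|\sum_{k\ge n}2^{-k}(g_k(t)-t)|\le |t-a|^2$, giving $f'(a)=\sum_{k\ge n}2^{-k}$ exactly. The $\operatorname{dist}^2$ bound is what replaces your balancing act and makes the two-sided derivative immediate.
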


\section{Proof}

 Before the proof of Theorem \ref{th}, we will introduce some notation and  prove two lemmas.
\begin{notation}
The symbol $\lambda$ stands for the Lebesgue measure on $\R $.
 Recall that, by our definition, each singular function is continuous and nonconstant. 
For a function defined on an interval $I$, its limits and derivatives are computed with respect to $I$.
\end{notation}
\begin{lemma}\label{lemacant}
 Let $J=[a_1,a_2]$ be a closed interval. Let $H\subset J$ be an $F_{\sigma}$ set with $\lambda(H)=0$. 
 Then there exists a nondecreasing singular function $\vf$ on $J$ such that
\begin{enumerate}
  \item [(i)] $\vf(a_i)=a_i$, $i=1,2$;
  \item  [(ii)]  $\vf'(x)=0$, $x\in H$.
 \end{enumerate}
\end{lemma}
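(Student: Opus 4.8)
The plan is to realize $\varphi$ as the distribution function of a non-atomic measure $\mu$ that is singular with respect to $\lambda$ and whose closed support avoids $H$. The guiding observation is that, since $\lambda(H)=0$, the set $J\setminus H$ is very large: it has full Lebesgue measure in $J$, and each $F_n$, being a closed null set, is nowhere dense. Once $\operatorname{supp}\mu\subseteq J\setminus H$, the function $\varphi$ will be locally constant near every point of $H$, which forces $\varphi'=0$ there; singularity of $\mu$ will give $\varphi'=0$ a.e.\ (hence $\varphi$ a singular function), and a suitable normalization of the total mass of $\mu$ will give condition~(1).

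The first step is to construct a Lebesgue-null Cantor set $K\subseteq J\setminus H$. Write $H=\bigcup_{n\ge1}F_n$ with each $F_n$ closed, hence (being null) nowhere dense. Run the usual Cantor scheme on $J$: having obtained finitely many disjoint nondegenerate closed intervals after step $n-1$, inside each of them choose a nondegenerate closed subinterval disjoint from $F_n$ (possible because $F_n$ is nowhere dense) and then take the left and right thirds of that subinterval as the two step-$n$ intervals; in particular every step-$n$ interval has length at most one third of its parent's. Let $K$ be the intersection of the nested unions of these intervals. Then $K$ is a nonempty compact perfect set; the total length of the step-$n$ intervals is at most $(2/3)^n(a_2-a_1)\to0$, so $\lambda(K)=0$; and since the step-$n$ intervals are disjoint from $F_n$ and later intervals are contained in earlier ones, $K\cap F_n=\emptyset$ for every $n$, i.e.\ $K\cap H=\emptyset$.

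Next I would put on $K$ the measure $\mu$ assigning mass $2^{-n}(a_2-a_1)$ to each step-$n$ interval (consistently, since each such interval is the disjoint union of its two children, each receiving half); this determines a non-atomic Borel measure $\mu$ on $J$ with total mass $a_2-a_1$ and $\operatorname{supp}\mu\subseteq K$. Put $\varphi(x)=a_1+\mu([a_1,x])$ for $x\in J$. Then $\varphi$ is nondecreasing, $\varphi(a_1)=a_1$ and $\varphi(a_2)=a_1+\mu(J)=a_2$, which is (1); it is continuous because $\mu$ has no atoms, and nonconstant because $\varphi(a_2)-\varphi(a_1)=a_2-a_1>0$; and because $\mu$ is carried by the $\lambda$-null set $K$ it is singular, so by Lebesgue's differentiation theorem $\varphi'=0$ $\lambda$-a.e., making $\varphi$ a singular function. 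Finally, if $x\in H$ then $x\notin K$, and since $K$ is closed there is $\delta>0$ with $(x-\delta,x+\delta)\cap K=\emptyset$; hence $\mu$ gives this interval measure $0$, $\varphi$ is constant on $(x-\delta,x+\delta)\cap J$, and therefore $\varphi'(x)=0$ (a one-sided derivative when $x$ is an endpoint of $J$, in accordance with the stated convention), which is (2).

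No step is genuinely hard; the only thing that needs care is producing a single Cantor set that avoids all of the countably many closed sets $F_n$ simultaneously while remaining Lebesgue-null, and this is arranged by dodging $F_n$ at the $n$-th stage of the Cantor construction while forcing the interval lengths to shrink geometrically. (Alternatively, one could invoke inner regularity to pick a compact $K_0\subseteq J\setminus H$ of positive measure and then extract a null Cantor subset of $K_0$; the explicit scheme above is merely more self-contained.)
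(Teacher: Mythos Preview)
Your proof is correct and follows the same overall strategy as the paper's: locate a $\lambda$-null Cantor-type compact set $K\subset J\setminus H$, put a non-atomic finite measure on it, and define $\varphi$ as the (affinely normalized) distribution function of that measure. The difference lies only in how $K$ and the measure are produced. The paper argues abstractly: $J\setminus H$ is a dense $G_\delta$, so its intersection with some dense null $G_\delta$ is an uncountable null $G_\delta$ (Baire), which then contains a copy $C$ of the Cantor set by the perfect-set theorem for Polish spaces (Kechris), and a non-atomic measure with full support on $C$ is obtained from a cited result of Hebert--Lacey. You instead build $K$ by an explicit Cantor scheme that dodges $F_n$ at stage $n$ (using only that each $F_n$ is nowhere dense) while forcing geometric decay of total length, and you put the natural dyadic measure on the scheme directly. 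Your route is more elementary and self-contained---it avoids the three external citations and delivers all the needed properties ($K$ closed, $K\cap H=\emptyset$, $\lambda(K)=0$, $\mu$ non-atomic with total mass $a_2-a_1$) in a single construction---whereas the paper's route is shorter to write but outsources the work to standard descriptive-set-theoretic facts.
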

\begin{proof}
 Since $J \setminus H$ is $G_{\delta}$ and dense in $[0,1]$, its intersection  with  a null dense $G_{\delta}$ set is 
 a null $G_{\delta}$ set  $S \subset J \setminus H$ which is uncountable by Baire category theorem. Since $S$ is uncountable and $G_{\delta}$,
 \cite[Theorem 13.6]{kechris1995classical} implies that $S$ contains a homeomorphic copy $C$ 
 of the Cantor set.
 We can choose 
(e.g. by \cite[Corollary 2.8.]{hebert1968supports})  a nonatomic  Borel finite measure $\nu$ on $C$ whose support  is   $C$. 
Now it is easy to see that the function $\vf$ defined by
$$\vf(x)=\frac{(a_2-a_1)}{\nu(C)} \nu(C \cap [a_1,x])+a_1, \quad x\in J,$$ is nondecreasing singular and satisfies conditions (i) and (ii).
\end{proof}

We will say that $P \subset (a,b)$ is an ``infinite partition of $(a,b)$'' if $a$ and $b$ are the only
 accumulation points of $P$. Clearly, $P$ has this property if and only if there exists a sequence
 $(p_z)_{z \in \Z}$ such that $P = \{p_z:\ z \in \Z\}$, $p_z < p_{z+1},\ z \in \Z,$ $\inf \{p_z:\ z\in \Z\}= a$
 and  $\sup \{p_z:\ z\in \Z\}= b$. Any such sequence $(p_z)_{z \in \Z}$ will be called  an ``ordering of 
$P$''.

 We will show that, for each $(a,b)$, there exists an infinite partition $P$ of $(a,b)$ such that, for each
ordering $(p_z)_{z \in \Z}$ of $P$ and  
  each $z \in \Z$,
\begin{equation}\label{inpa}
 p_{z+1}- p_z < \min \{|p_z-a|^2, |p_{z+1} -b|^2\}. 
\end{equation}
To this end, choose an arbitrary infinite partition $P^*$ of $(0,1)$, its ordering $(p^*_z)_{z \in \Z}$ and, for each $z \in \Z$, choose
 a finite partition $p_z^*=t_{z,0} <  t_{z,1}<\dots< t_{z,k_z}= p_{z+1}^*$ of $[p_z^*,p_{z+1}^*]$ such that
 $\max\{t_{z,i} - t_{z,i-1}:\  1\leq i \leq k_z\} < \min((p_z^*-a)^2, (b-p^*_{z+1})^2)$. Now it is 
 easy to see that $P:= \{t_{z,i}:\  z \in \Z,\ 0\leq i \leq k_z\}$ has the desired property.

\begin{lemma}\label{lema1}
 Let $\{0,1\} \subset F\subset [0,1]$ be a closed set with $\lambda(F)=0$. Let $M\subset (0,1)\setminus F$ be an $F_{\sigma}$ set with $\lambda(M)=0$.
 Then there exists a nondecreasing singular function $g$ on $[0,1]$ such that
  \begin{enumerate}
  \item [(i)] $g(z)=z$,\ $z \in F$;
  \item [(ii)] $g'(z)=0, \ z\in M$;
  \item [(iii)] $|g(x) -x| \leq \dist^2(x,F),\ \ x \in [0,1].$
 \end{enumerate}
\end{lemma}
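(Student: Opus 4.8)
The plan is to decompose $[0,1]$ into the countably many complementary intervals of the closed set $F$, and on each such interval run Lemma \ref{lemacant}, then glue the pieces together. Write the (relatively) open set $(0,1)\setminus F$ as a disjoint union of open intervals $\{(a_k,b_k)\}_{k}$ (the connected components), where the endpoints $a_k,b_k$ lie in $F$. On each component we want a nondecreasing singular function which fixes the endpoints, kills the derivative on $M$, and stays close enough to the identity; but to make the glued function singular (rather than merely constant on a component — which would prevent strict behavior and, more importantly, we need control) and to satisfy (iii), we need a finer subdivision of each $(a_k,b_k)$. This is exactly what the "infinite partition" construction preceding the lemma provides.

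Here are the steps in order. First, for each component $(a_k,b_k)$ fix an infinite partition $P_k$ of $(a_k,b_k)$ satisfying the displayed estimate \eqref{inpa}, with an ordering $(p^k_z)_{z\in\Z}$; this breaks $(a_k,b_k)$ into the countably many closed subintervals $J^k_z=[p^k_z,p^k_{z+1}]$. Second, to each $J^k_z$ apply Lemma \ref{lemacant} with $H:=J^k_z\cap M$ (which is $F_\sigma$ and null), obtaining a nondecreasing singular function $\vf^k_z$ on $J^k_z$ with $\vf^k_z(p^k_z)=p^k_z$, $\vf^k_z(p^k_{z+1})=p^k_{z+1}$, and $(\vf^k_z)'=0$ on $J^k_z\cap M$. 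Third, define $g$ on $[0,1]$ by $g:=\vf^k_z$ on each $J^k_z$ and $g(z):=z$ for $z\in F$; the boundary-matching conditions in Lemma \ref{lemacant}(i) make this well defined, and since the $J^k_z$ together with $F$ cover $[0,1]$, $g$ is defined everywhere. Fourth, check the required properties: $g$ is nondecreasing (it is nondecreasing on each piece and the pieces are laid end to end in increasing order, with values on $F$ being the identity), $g$ is continuous (continuous on each $J^k_z$, and continuity at points of $F$ follows from (iii), which we verify next), $g$ is singular, $g(z)=z$ on $F$ is by construction, and $g'(z)=0$ for $z\in M$ because any such $z$ lies in the interior of some $J^k_z$ where $g=\vf^k_z$ and we invoked Lemma \ref{lemacant}(ii).

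The estimate (iii), $|g(x)-x|\le\dist^2(x,F)$, is the crux and the place where \eqref{inpa} is used. Fix $x\notin F$; then $x\in J^k_z=[p^k_z,p^k_{z+1}]$ for some $k,z$, and both $g(x)$ and $x$ lie in $J^k_z$ (since $\vf^k_z$ maps $J^k_z$ into itself, being nondecreasing with fixed endpoints), so $|g(x)-x|\le p^k_{z+1}-p^k_z$. By \eqref{inpa} this is at most $\min\{|p^k_z-a_k|^2,|p^k_{z+1}-b_k|^2\}$. Since $a_k,b_k\in F$, we have $\dist(x,F)\le\min\{|x-a_k|,|x-b_k|\}$, but we need a lower bound on $\dist(x,F)$ in terms of the endpoints of the subinterval; the point is that $x$ lies between $p^k_z$ and $p^k_{z+1}$, so $|x-a_k|\ge |p^k_z-a_k|$ if $x$ is in the "left half" and $|x-b_k|\ge|p^k_{z+1}-b_k|$ if $x$ is in the "right half" — more carefully, one of $|p^k_z-a_k|,|p^k_{z+1}-b_k|$ is $\le\dist(x,F)$ because the nearest point of $F$ to $x$ is at distance at least the distance from $x$ to whichever of $a_k,b_k$ is on the far side, hence at least $\min(|p^k_z - a_k|,|p^k_{z+1}-b_k|)$ once one observes $\dist(x,F)\ge\min(|x-a_k|,|x-b_k|)\ge$ that minimum after noting no point of $F$ lies strictly between $a_k$ and $b_k$. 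Combining, $|g(x)-x|\le p^k_{z+1}-p^k_z\le \big(\min(|p^k_z-a_k|,|p^k_{z+1}-b_k|)\big)^2\le \dist^2(x,F)$. For $x\in F$ both sides are $0$. The continuity of $g$ at points of $F$ then follows immediately from (iii), since $\dist(\cdot,F)\to 0$ there.

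The main obstacle I anticipate is verifying that the glued function $g$ is singular, i.e. nonconstant with $g'=0$ a.e. That $g'=0$ a.e. is clear: a.e. point lies in the interior of some $J^k_z$ (as $F$ is null and the partition points are countable), where $g$ agrees with the singular function $\vf^k_z$. Nonconstancy is immediate since $g(0)=0\ne 1=g(1)$ (using $\{0,1\}\subset F$). So in fact singularity is routine here; the only genuinely delicate point is the interplay in step (iii) between \eqref{inpa} and the geometry of $\dist(x,F)$, which is why the infinite-partition lemma was set up in advance with exactly that quadratic bound. One should also double check measurability/$F_\sigma$-ness of each $H=J^k_z\cap M$ (a closed set intersect an $F_\sigma$ set is $F_\sigma$) and nullity (subset of the null set $M$), so that Lemma \ref{lemacant} genuinely applies.
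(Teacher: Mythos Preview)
Your approach is essentially identical to the paper's: decompose $(0,1)\setminus F$ into components, refine each via an infinite partition satisfying \eqref{inpa}, apply Lemma~\ref{lemacant} on each subinterval, and glue. The verification of (iii) and of continuity at points of $F$ matches the paper's argument.

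There is one small gap. You assert that every $z\in M$ lies in the \emph{interior} of some $J^k_z$, but nothing prevents a partition point $p^k_z$ from belonging to $M$ (for instance, $M$ could contain all rationals in $(0,1)\setminus F$). The paper handles this by taking $H:=(M\cap J^k_z)\cup\{p^k_z,p^k_{z+1}\}$ when invoking Lemma~\ref{lemacant}, which forces the one-sided derivatives of each $\vf^k_z$ to vanish at the partition endpoints; then at any partition point the two adjacent pieces give matching zero one-sided derivatives, so $g'=0$ there as well. Alternatively you could choose the partition points to avoid the null set $M$, but you would need to say so. Either fix is routine; otherwise your argument is complete.
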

\begin{proof}
 Set $G=(0,1)\setminus F$. Then $G=\bigcup_{\alpha \in A} I_{\alpha} $,  where $(I_{\alpha})_{\alpha \in A}$
 is a (nonempty countable) disjoint system of open intervals.
For each $\alpha \in A$, choose an infinite partition $P_{\alpha}$ of $I_{\alpha}=: (a_{\alpha}, b_{\alpha})$
 and its ordering $(p_z^{\alpha})_{z \in \Z}$ such that \eqref{inpa} holds for  $[a,b]:= [a_{\alpha}, b_{\alpha}]$
 and $p_z:= p_z^{\alpha}$. For any  $\alpha \in A$ and $z \in \Z$ we apply Lemma \ref{lemacant}
 to $[a_1,a_2]:= [p_z^{\alpha}, p_{z+1}^{\alpha}]$ and $H:= (M \cap [p_z^{\alpha}, p_{z+1}^{\alpha}])\cup
 \{p_z^{\alpha}, p_{z+1}^{\alpha}\}$ and obtain a nondecreasing  singular function $\vf_z^{\alpha}$
 on $[p_z^{\alpha}, p_{z+1}^{\alpha}]$ such that 
\begin{equation}\label{hv}
\vf_z^{\alpha}(p_z^{\alpha}) = p_z^{\alpha},\ \vf_z^{\alpha}( p_{z+1}^{\alpha}) =  p_{z+1}^{\alpha},\ 
\end{equation}
\begin{equation}\label{dvhv}
  (\vf_z^{\alpha})'_+ (p_z^{\alpha})  =  (\vf_z^{\alpha})'_- ( p_{z+1}^{\alpha})=0\ \ \      
\text{and}\ \ \  (\vf_z^{\alpha})'(x)=0,\ x \in M \cap (p_z^{\alpha}, p_{z+1}^{\alpha}).
 \end{equation}
For each  $\alpha \in A$, put  $g_{\alpha}(x):= \vf_z^{\alpha}(x),\ x \in  [p_z^{\alpha}, p_{z+1}^{\alpha}]$.
 By \eqref{hv}, the definition is correct and $g_{\alpha}$ is a continuous nondecreasing function on
 $I_{\alpha}$ which has a.e. zero derivative. Moreover, \eqref{dvhv} implies that  $g_{\alpha}'(x)=0$ for
 each $x \in M \cap I_{\alpha}$. 

Finally put  $g(x):= x$ if $x \in F$ and $g(x):= g_{\alpha}(x)$ if $x \in I_{\alpha}$. Then $g$ is clearly
 nondecreasing on all $[0,1]$, has a.e. zero derivative, is continuous on each $I_{\alpha}$ and properties
 (i) and (ii) hold. 

The inequality of (iii) is trivial for  $x\in F$. If $x \in [0,1] \setminus F$, then $x \in I_{\alpha}= (a_{\alpha}, b_{\alpha})$ for some $\alpha \in A$
 and we can choose $z\in \Z$ such that $x \in [p_z^{\alpha}, p_{z+1}^{\alpha}]$. Then we have
 $g(x)=  g_{\alpha}(x) = \vf_z^{\alpha}(x)$ and, since $\vf_z^{\alpha}(x) \in [p_z^{\alpha}, p_{z+1}^{\alpha}]$,
 we obtain
$$ |g(x)-x| = |\vf_z^{\alpha}(x) - x| \leq  p_{z+1}^{\alpha}- p_z^{\alpha} \leq \min\{|p_z^{\alpha}-a_{\alpha}|^2,
 |p_{z+1}^{\alpha}-b_{\alpha}|^2\} \leq \dist^2(x,F)$$
  and so (iii) is proved.
	
	It remains to prove that $g$ is continuous on all $[0,1]$. We know that $g$ is continuous on $G$ and
	 (iii) implies that, if $a \in F$ is fixed, we have for each  $x \in [0,1]$
	$$ |g(x)-g(a)| = |g(x)-a|\leq |g(x)-x| + |x-a| \leq |x-a|^2 + |x-a| \to 0,\ \ x \to a.$$
\end{proof}

 {\bf Proof of Theorem.}
\smallskip

To prove the implication ``$(i) \Rightarrow (ii) $'', suppose that $f$ is a singular increasing function on $[0,1]$
 and $0< f'(x) < \infty$ for each $x \in A$. Let  $f^*$ be a continuous extension of $f$ to $\R$
 and denote  $E_f:= \{x\in (0,1):\ -\infty < f'(x)<\infty\}$. Then $f'(x)= \lim_{n \to \infty}
 n (f^*(x+1/n) - f^*(x)),\ x \in E_f$,\ and so $ f'|_{E_f}$ is a first Baire class function on $E_f$.
 Consequently $P:=\{x\in E_f: f'(x) >0\}$ is an $F_{\sigma}$ subset in the space $E_f$ 
(see e.g. \cite [Theorem 10.12]{bruckner1997real})
 and consequently there exists an $F_{\sigma}$ set $H \subset [0,1]$ such that
 $P= E_f \cap H$. Since $f$ is singular, $P$ is a null set,
 $[0,1]\setminus E_f$ is a null set and consequently also $H \subset P \cup ([0,1]\setminus E_f)$ is null.
 Therefore $A \subset P \cup \{0,1\}$ is contained in an $F_{\sigma}$ null set $H \cup \{0,1\}$.
 \smallskip

To prove the  implication ``$(ii) \Rightarrow$  (i)'', let $A \subset M$, where $M \subset [0,1]$ is an $F_{\sigma}$ null set. 
Write $M=\bigcup_{n=1}^{\infty} F_n$, where $F_n$ are closed subsets of $[0,1]$.
  Without any loss of generality we can assume that $M$ is dense in $[0,1]$,
  $ F_1\subset F_2\subset F_3 \subset \dots$ and $\{0,1\}\subset F_1$. 
  
 Set $M_n: = M\setminus F_{n}$, $n \in \N$. Then $M_n$ is an $F_{\sigma}$ set with $\lambda(M_n)=0$ and $M_n\subset (0,1)\setminus F_n$.
 Applying  Lemma \ref{lema1} to the sets $F_n$ and $M_n$ we obtain nondecreasing singular functions 
$g_n$ on $[0,1]$ such that, for each $n \in \N$,
\begin{equation}\label{osn} 
 \text{the conditions (i) - (iii) of  Lemma \ref{lema1} hold for}\ \ g:=g_n,\   F:= F_n,\ \ M:=M_n.
\end{equation}
 Now set
 $$
  f(x)=\sum_{n=1}^{\infty}\frac{1}{2^n}g_n(x), \quad x\in [0,1].
 $$
  Clearly $f$ is continuous and nondecreasing on $[0,1]$.
According to the Fubini theorem on derivative of a sum of monotone functions 
\cite[Theorem 17.18]{hewitt1969real} the function $f$ has zero derivative almost everywhere.

Now we will  prove that 
\begin{equation}\label{nzf}
\text{at each point $a\in M$, there exists a nonzero finite $f'(a)$.}
\end{equation}
So fix an arbitrary $a\in M$. Let $n\in \N$ be the natural number such that $a\in F_n\setminus F_{n-1}$, (where 
$F_0: =\emptyset$).
If $1\leq k <n$, then  $a\in M_k$  and so \eqref{osn} implies
$$
\Bigl(\frac{1}{2^k}g_k\Bigr)'(a)=0.
 $$
		Consequently, to prove  $0< f'(a) < \infty$,  it is sufficient to show that
$$  r_n'(a)= 	\sum_{k=n}^{\infty}\frac{1}{2^k},\ \ \text{where}\ \ 
r_n=\sum_{k=n}^{\infty}\frac{1}{2^k} g_k.$$
And this equality is true since, using  \eqref{osn} and $a\in F_k$ for $k\geq n$, we obtain  
\begin{multline*}
|r_n(x) -r_n(a) - \sum_{k=n}^{\infty}\frac{1}{2^k}(x-a)| \leq \sum_{k=n}^{\infty}\frac{1}{2^k}|g_k(x) - g_k(a) - (x-a)|\\
= \sum_{k=n}^{\infty}\frac{1}{2^k} |g_k(x) -x| \leq \sum_{k=n}^{\infty}\frac{1}{2^k} \dist^2(x,F_k)
 \leq |x-a|^2 = o(|x-a|), \ \ x\to a.
\end{multline*} 
Observe that $f$ is strictly increasing since	it is nondecreasing, $M$ is dense and 
\eqref{nzf} holds.	Using 	\eqref{nzf} and $A\subset M$, we conclude that (i) holds, since $f$ has all desired properties.

\end{document}